\theoremstyle{plain}
\newtheorem{theorem}{Theorem}
\newtheorem{lemma}{Lemma}
\begin{document}

\title{On Crosspatch Knight's Tours}
\author{Nikolai Ivanov Beluhov}
\date{}

\maketitle

\begin{abstract}

A knight's tour is often represented as a broken line connecting the centers of successively visited squares. We say that two knight moves form a cross if the midpoints of their respective segments coincide. We show that no knight tour exists on a rectangular board in which every move is part of a cross. We also establish the general structure of pseudotours with this property.   
	
\end{abstract}

\section{Introduction}

A knight's tour is often depicted as a broken line connecting the centers of successively visited squares, each knight move represented by a segment in the line.

In this setting, it is natural to ask whether a tour exists subject to various local restrictions of a purely geometric nature. For instance, in \cite{j76}, G. P. Jelliss has shown that a closed knight tour which does not contain a right angle (i.e., two successive moves at right angles) does not exist on an $8 \times 8$ board; in \cite{j85}, that one exists on a $10 \times 10$ board and all larger boards; and in \cite{b06}, the present author has shown that a closed knight tour which does not contain an obtuse angle does not exist on an $8 \times 8$ board but exists on a $10 \times 10$ board.

Following the terminology of \cite{j84}, we say that two knight moves form a \emph{central cross}, or simply \emph{cross}, if their midpoints coincide. (Fig.\ \ref{f1}) We say that a knight graph is a \emph{crosspatch} graph if every knight move in it is part of a cross.

\begin{figure}[h!]
\centering
\includegraphics[scale=1.0]{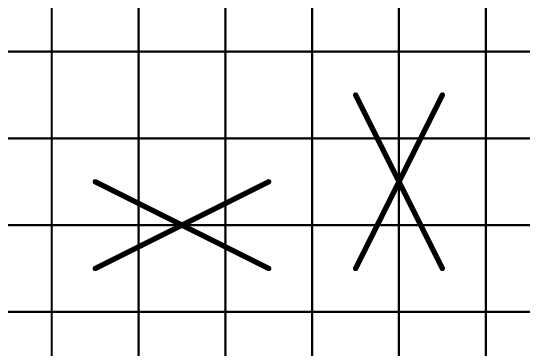}
\caption{}
\label{f1}
\end{figure}

In order to study crosspatch tours, it proves rather helpful to examine crosspatch \emph{pseudotours} as well. A knight pseudotour, as defined by Jelliss, is a knight graph in which every vertex is of degree two. Obviously, a pseudotour is made up of a number of independent cycles.

In \cite{j84}, Jelliss conjectures that the only connected chessboard which admits a crosspatch tour is the eight-square ring obtained by removing the central square from a $3 \times 3$ board, and enumerates all crosspatch pseudotours on an $8 \times 8$ board.

In the present paper, we give a complete description of the structure of crosspatch pseudotours on rectangular boards. From this description, we derive the non-existence of crosspatch tours on boards of this type.

The statement of Theorem \ref{t1} below was suspected by the author in 2007, but a proof was elusive until October 2012.

\section{Crosspatch Pseudotours}

Given an $m \times n$ rectangular chessboard $B$, let $G$ be a crosspatch pseudotour on $B$ whose vertices are the centers of $B$'s squares and whose edges are undirected knight moves.

\begin{figure}[ht]
\hspace{0pt}\hfill\begin{minipage}{0.5\linewidth}
\centering
\includegraphics[scale=0.75]{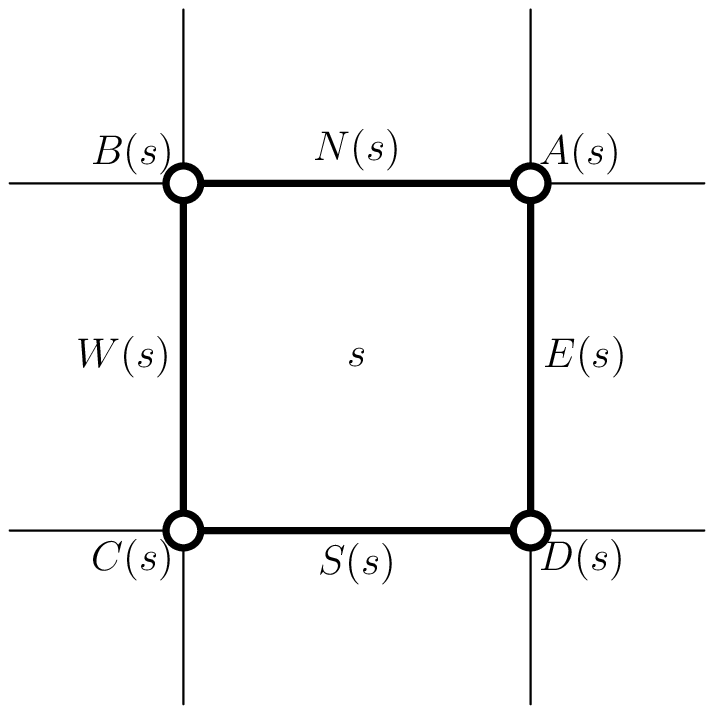}
\caption{}
\label{f2}
\end{minipage}\hfill\begin{minipage}{0.5\linewidth}
\centering
\includegraphics[scale=0.875]{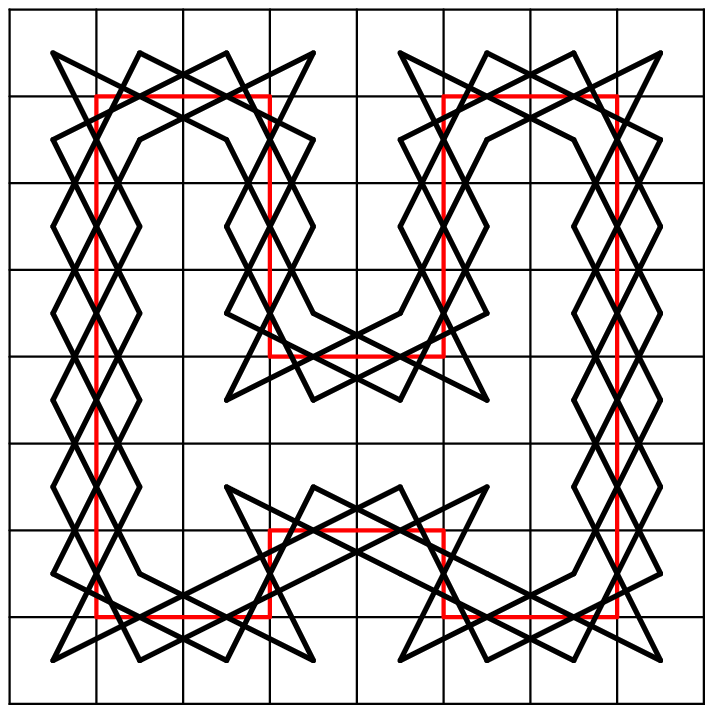}
\caption{}
\label{f3}
\end{minipage}\hfill\hspace{0pt}
\end{figure}

We begin by introducing some useful notation.

We label the columns of $B$ with the numbers from $1$ to $m$, the rows of $B$ with the numbers from $1$ to $n$, and every square in $B$ with the ordered pair $(i, j)$ of the labels of its column and row.

If $s$ is a square in $B$, then let $N(s)$, $E(s)$, $S(s)$, and $W(s)$ be its top, right, bottom, and left edge, respectively, and let $A(s)$, $B(s)$, $C(s)$, and $D(s)$ be its top-right, top-left, bottom-left, and bottom-right vertex, respectively, as in Fig.\ \ref{f2}.

If $s \equiv (i, j)$, then we will also write $N(i, j)$ for $N(s)$, $E(i, j)$ for $E(s)$, etc. In this manner, $E(i, j) \equiv W(i + 1, j)$, $N(i, j) \equiv S(i, j + 1)$, and $A(i, j) \equiv B(i + 1, j) \equiv C(i + 1, j + 1) \equiv D(i, j + 1)$ for all $i$, $j$ such that the corresponding squares exist.

In order to distinguish between the vertices and edges of $G$ and the vertices and edges of the squares in $B$, we will refer to the former as \emph{graph} vertices and edges, and to the latter as \emph{board} vertices and edges.

Finally, label each board vertex with the ordered pair of the labels of the column on its left (0 for the leftmost board vertices) and the row just below it (0 for the bottommost board vertices), and, for every board vertex $v$, let $N(v)$, $E(v)$, $S(v)$, and $W(v)$ be the board edges adjacent to it and pointing up, right, down, and left from it, respectively. In this manner, $N(v) \equiv W(C^{-1}(v)) \equiv E(D^{-1}(v))$, $E(v) \equiv N(B^{-1}(v)) \equiv S(C^{-1}(v))$, etc., for all board vertices $v$ such that the corresponding squares $A^{-1}(v)$, $B^{-1}(v)$, etc., exist.

For every cross formed by two graph edges in $G$, colour in red the board edge whose midpoint coincides with the center of the cross, and let $H$ be the graph whose vertices are all board vertices and whose edges are all red board edges. Fig.\ \ref{f3} depicts a typical crosspatch pseudotour $G$ together with the associated $H$.

\begin{lemma}
\label{l1}

Every board vertex in $H$ has an even degree.

\end{lemma}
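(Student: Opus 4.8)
The plan is to convert the crosspatch hypothesis into a rigid structural decomposition of $G$, reduce the Lemma to a parity statement attached to each board vertex, and then settle that parity globally.

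\emph{Decomposition.} First I would note that through the midpoint of any single board edge there pass exactly two knight moves: if that board edge is vertical, the two long diagonals of the $2\times 3$ block of cells straddling it; if it is horizontal, the two long diagonals of the corresponding $3\times 2$ block. Since $G$ is a crosspatch graph every edge of $G$ lies in a cross, and since a cross is recovered from its center, the edges of $G$ fall uniquely into such cross-pairs; each cross-pair is the pair of long diagonals of a $2\times 3$ or $3\times 2$ rectangle, and the red edge it contributes is precisely the central board edge of that rectangle. Because every cell has graph-degree two and a cross-pair meets each of its four corner cells in exactly one edge, every cell lies in exactly two cross-pairs.

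\emph{Local reduction.} Now fix a board vertex $w$. If $w$ lies on the border of $B$ then no board edge at $w$ can be red, since the $2\times 3$ or $3\times 2$ block required for the corresponding cross would stick out of $B$; so $\deg_H(w)=0$ there, and we may take $w$ interior. Let $P$, $Q$, $R$, $S$ be the four cells having $w$ as a corner, with $P$ lower-left and $S$ upper-right, so $S=P+(1,1)$. Tracing which knight move creates each of the four board edges meeting $w$, one checks (using the equivalence of the two diagonals of a cross) that $N(w)$ and $E(w)$ are red exactly when $P$ carries, respectively, its $(+1,+2)$ and its $(+2,+1)$ edge in $G$, and that $S(w)$ and $W(w)$ are red exactly when $S$ carries, respectively, its $(-1,-2)$ and its $(-2,-1)$ edge in $G$. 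Therefore
\[
\deg_H(w)=\bigl(\text{number of $G$-edges leaving }P\text{ into its north-east quadrant}\bigr)+\bigl(\text{number of $G$-edges leaving }S\text{ into its south-west quadrant}\bigr),
\]
and the Lemma becomes the assertion that, for every cell $c$ with both $c$ and $c+(1,1)$ in $B$, the number of north-east-going $G$-edges at $c$ and the number of south-west-going $G$-edges at $c+(1,1)$ have the same parity.

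\emph{The crux.} This parity statement is the heart of the matter, and it is genuinely global: the degree-two and crosspatch conditions restricted to any bounded neighbourhood of $w$ are compatible with $\deg_H(w)$ being odd, so the finiteness and rectangular shape of $B$ must be used. My plan here is a counting argument: summing the displayed identity over a sub-rectangle $U$ of interior board vertices and rearranging, $\sum_{w\in U}\deg_H(w)$ reduces modulo $2$ to a sum of indicators of $G$-edges that straddle the sides of $U$ in a staircase pattern; when $U$ is the whole interior all those edges leave $B$ and the sum empties, which only recovers the trivial fact that $\sum_w\deg_H(w)$ is even. To isolate a single $w$ I would order the board vertices, assume $\deg_H$ even at all predecessors, take $U$ to be the tail beyond $w$, and try to show the resulting staircase sum is even --- equivalently, that a certain family of $G$-edges cut off near a corner of $B$ has even cardinality. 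Pinning this down, that is, understanding exactly how the cross-pair structure of a crosspatch pseudotour behaves at a corner of the board, is the main obstacle; a clean resolution would amount to exhibiting a two-colouring of the cells of $B$ whose colour flips precisely across the red edges, which by planar duality is itself just a restatement of the Lemma.
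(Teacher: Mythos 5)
Your decomposition and local reduction are correct, and they are in substance the same starting point as the paper's: since the two diagonals of a $2\times3$ (or $3\times2$) block are the only knight moves sharing a given midpoint, crosspatchness means a board edge is red as soon as either diagonal of its block lies in $G$, and the degree-two condition then says that exactly two of the eight candidate board edges surrounding any cell are red --- this is your observation that every cell lies in exactly two cross-pairs. The problem is that the argument stops precisely where the Lemma actually lives. Your ``crux'' paragraph is a plan rather than a proof: summing your per-vertex formula over sets of vertices can only yield handshake-type identities (as you note, over the whole interior it returns the trivial evenness of $\sum_w \deg_H(w)$); the proposed induction over an ordering of the board vertices is not carried out; and you concede that the ``staircase'' parity you would need is unresolved, yourself observing that the resolution you envisage is circular. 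So there is a genuine gap: nothing in the proposal establishes evenness of $\deg_H(w)$ at a single fixed vertex, which is the entire content of the Lemma.

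The missing idea, as the paper implements it, is to sum the per-cell identity rather than the per-vertex one, and to sum it over a quadrant anchored at a corner of the board. Fix $u\equiv(a,b)$ and add up, over all cells $(i,j)$ with $1\le i\le a$ and $1\le j\le b$, the statement that exactly two of the eight candidate board edges around the cell are red; the total is $2ab$. Regrouping by board edge, a red board edge is counted once for each corner cell of its $2\times3$ block that lies in this quadrant, and that number is $0$, $1$, $2$, or $4$, being odd precisely for the four board edges incident with $u$; moreover, board edges adjacent to the border of $B$ are never red (their block would protrude), so the left and bottom sides of the quadrant, which lie along the board's border, create no boundary terms. Reducing modulo $2$ kills everything except the contribution of the four edges at $u$, whose number of red members is therefore even --- with no induction and no ordering of the vertices. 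The corner anchoring is exactly the non-local, boundary-using input that you correctly identified as indispensable but did not supply.
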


Note that this statement does not hold on infinite, cylinder, or toroidal boards, as seen in Fig.\ \ref{f6}. This is a strong indication that a proof must necessarily make use of some sort of non-local argument involving the board's boundary.

\begin{figure}[ht]
\centering
\includegraphics[scale=0.875]{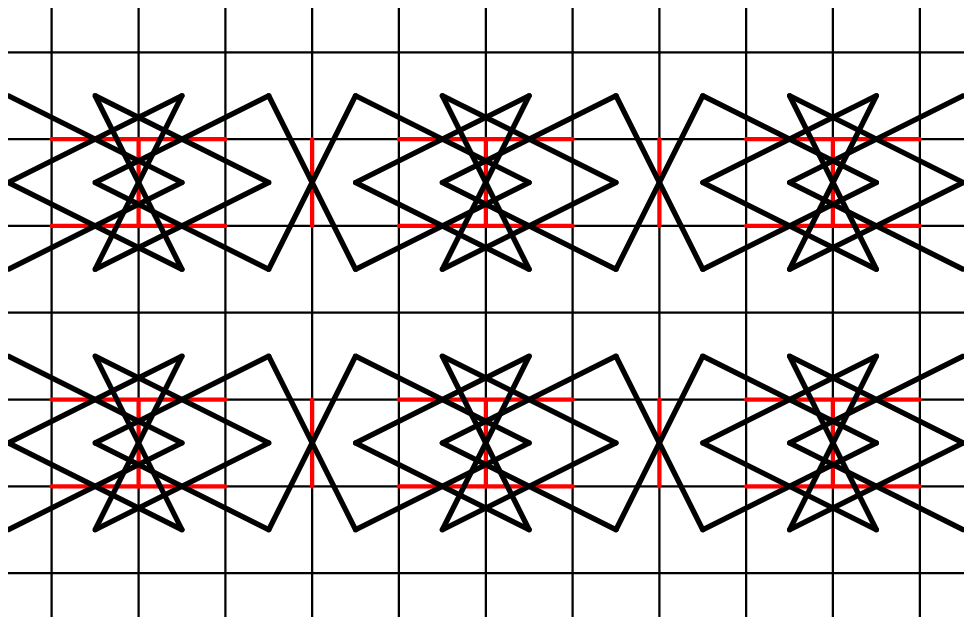}
\caption{}
\label{f6}
\end{figure}

\begin{proof}

For every board edge $e$, let $\alpha(e)$ be equal to 1 if $e$ is red, and 0 otherwise. Notice that, if $e$ is adjacent to a border board vertex, then $\alpha(e) = 0$.

For every square $s$, the midpoints of the two graph edges adjacent to $s$ coincide with the midpoints of exactly two of the board edges $E(A(s))$, $N(A(s))$, $N(B(s))$, $W(B(s))$, $W(C(s))$, $S(C(s))$, $S(D(s))$, and $E(D(s))$. (Some of these board edges will not exist for some $s$.) Conversely, for every red board edge among those, one of the graph edges in its corresponding cross is adjacent to $s$. It follows, then, that, for all $s$, \[
\begin{split}
2 =\; &\alpha(E(A(s))) + \alpha(N(A(s))) + \alpha(N(B(s))) + \alpha(W(B(s))) +\\ 
  +\; &\alpha(W(C(s))) + \alpha(S(C(s))) + \alpha(S(D(s))) + \alpha(E(D(s))).
\end{split}
\]

Let $u \equiv (a, b)$ be an arbitrary board vertex. Then
\begin{align*}
2ab\; &= \sum\limits_{\substack{1 \le i \le a\\ 1 \le j \le b\\ s \equiv (i, j)}} (\alpha(E(A(s))) + \ldots + \alpha(E(D(s)))) = \\
    &=\; 4\sum\limits_{\substack{1 \le i \le (a - 1)\\ 1 \le j \le (b - 2)\\ v \equiv (i, j)}} \alpha(N(v))\;+\; 
      4\sum\limits_{\substack{1 \le i \le (a - 2)\\ 1 \le j \le (b - 1)\\ v \equiv (i, j)}} \alpha(E(v))\;+ \\ 
    &+\; 2\sum\limits_{\substack{i = a\\ 1 \le j \le (b - 1)\\ v \equiv (i, j)}} (\alpha(W(v)) + \alpha(E(v)))\;+\;
      2\sum\limits_{\substack{i = a\\ 1 \le j \le (b - 2)\\ v \equiv (i, j)}} \alpha(N(v))\;+ \\ 
    &+\; 2\sum\limits_{\substack{1 \le i \le (a - 1)\\ j = b\\ v \equiv (i, j)}} (\alpha(S(v)) + \alpha(N(v)))\;+\;
      2\sum\limits_{\substack{1 \le i \le (a - 2)\\ j = b\\ v \equiv (i, j)}} \alpha(E(v))\;+ \\
    &+\; [\alpha(N(u)) + \alpha(E(u)) + \alpha(S(u)) + \alpha(W(u))],
\end{align*}
and therefore $\alpha(N(u)) + \alpha(E(u)) + \alpha(S(u)) + \alpha(W(u))$ is even, as needed. (Fig.\ \ref{f4}) \qedhere

\end{proof}

\begin{figure}[ht]
\hspace{0pt}\hfill\begin{minipage}{0.45\linewidth}
\centering
\includegraphics[scale=0.625]{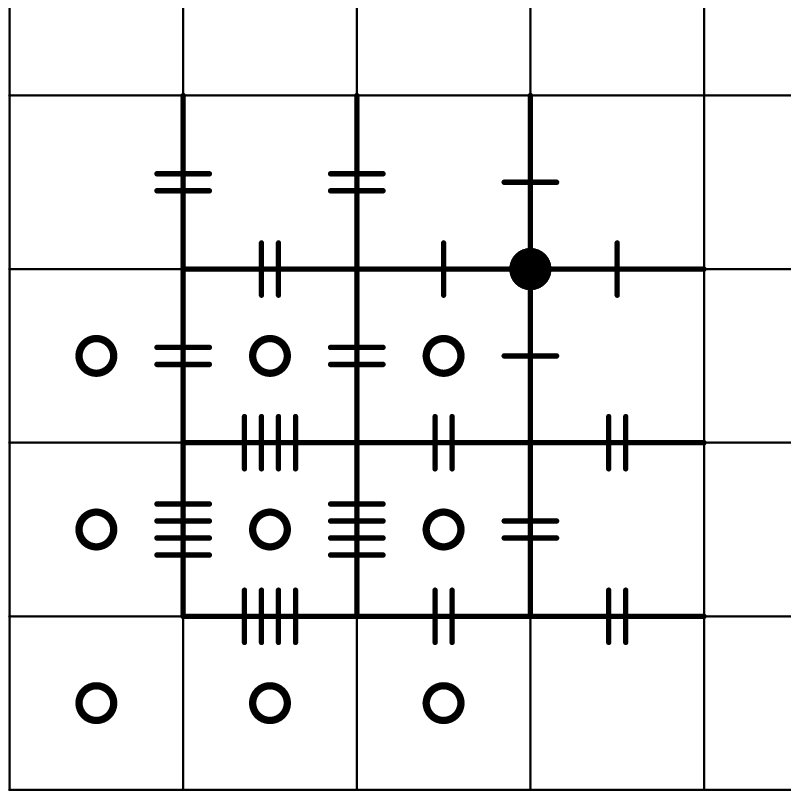}
\caption{}
\label{f4}
\end{minipage}\hfill\begin{minipage}{0.45\linewidth}
\centering
\includegraphics[scale=1.0]{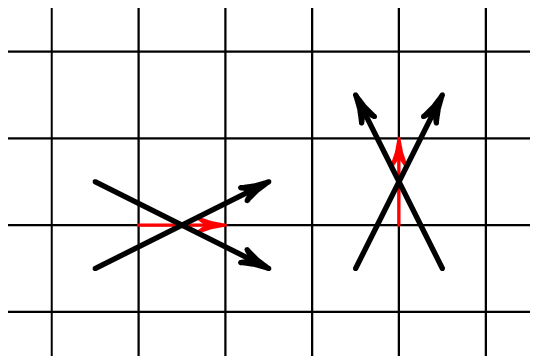}
\caption{}
\label{f5}
\end{minipage}\hfill\hspace{0pt}
\end{figure}

\begin{lemma}
\label{l2}

Every board vertex in $H$ is of degree either 0 or 2.

\end{lemma}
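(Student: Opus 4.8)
The plan is to deduce the result from Lemma \ref{l1} together with a short study of the local structure that $G$ is forced into around a board vertex of high degree. Since a board vertex is incident to at most four board edges, Lemma \ref{l1} already restricts its degree in $H$ to $0$, $2$, or $4$, so it is enough to rule out degree $4$. Suppose, for contradiction, that $v \equiv (a, b)$ has degree $4$, i.e.\ that $N(v)$, $E(v)$, $S(v)$, and $W(v)$ are all red.

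First I would pin down $G$ near $v$. The midpoint of a board edge is the common midpoint of at most two knight moves; hence, when a board edge is red, its cross is made up of precisely those two moves, and both of them lie in $G$. Reading off the crosses of $N(v)$, $E(v)$, $S(v)$, $W(v)$ puts eight explicit knight moves into $G$, and a direct inspection shows that each of the four squares $(a, b)$, $(a+1, b)$, $(a, b+1)$, $(a+1, b+1)$ surrounding $v$ is incident to exactly two of them. As $G$ is a pseudotour, these two are \emph{all} the graph edges at that square. In particular, the squares $(a, b+1)$ and $(a+1, b+1)$ lying just above $v$ have each of their graph edges running into row $b - 1$ or row $b$, so neither of them has a graph edge reaching a row $\ge b + 2$.

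Next I would produce a board vertex of odd degree in $H$. Let $v'$ be the board vertex just above $v$, so that $N(v) = S(v')$ is red and $v'$ has positive degree in $H$. None of the other three board edges at $v'$, namely $N(v')$, $E(v')$, $W(v')$, can be red: if one of them were, its cross would force into $G$ a knight move incident to $(a, b+1)$ or $(a+1, b+1)$ that reaches a square in a row $\ge b + 2$, which the previous paragraph forbids. Hence $v'$ has degree exactly $1$ in $H$, contradicting Lemma \ref{l1}. (Fig.\ \ref{f5})

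The one place where care is needed is the inspection in the first step: one must check that each of the four squares around $v$ is genuinely saturated by the forced moves, and that, for each of $N(v')$, $E(v')$, $W(v')$ separately, redness really does demand an upward move at $(a, b+1)$ or $(a+1, b+1)$. Both are short finite checks, most transparently carried out with a picture; granting them, Lemma \ref{l1} closes the argument immediately.
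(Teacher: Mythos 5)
Your argument is correct and is essentially the paper's own proof: assume a board vertex $v$ of degree $4$, observe that the four red crosses saturate the four squares around $v$ (so the two squares above $v$ can carry no further moves, in particular none reaching higher rows), conclude that the three remaining board edges at the vertex directly above $v$ cannot be red, and obtain a degree-$1$ vertex in $H$ contradicting Lemma \ref{l1}. The only difference is cosmetic: you make the eight forced knight moves explicit, while the paper phrases the same finite checks as "square $C^{-1}(v)$ (resp.\ $D^{-1}(v)$) would be connected to three other squares."
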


\begin{proof}

Indeed, suppose that the board vertex $v \equiv (i, j)$ in $H$ was of degree 4, the four edges $N(v)$, $E(v)$, $S(v)$, $W(v)$ all being red. Consider the board vertex $u \equiv (i, j + 1)$. (Since the board edge $N(v)$ exists, $u$ must also exist.)

The board edge $W(u)$ cannot be red, because in this case the board edges $W(u)$, $W(v)$, and $S(v)$ being red would imply that the square $C^{-1}(v)$ is connected in $G$ to three other squares, which is a contradiction. The board edge $E(u)$ cannot be red either, because in this case the board edges $E(u)$, $E(v)$, and $S(v)$ being red would imply that the square $D^{-1}(v)$ is connected in $G$ to three other squares, a contradiction. Finally, the board edge $N(u)$, if it exists, cannot be red, because in this case each one of the two squares $C^{-1}(v)$ and $D^{-1}(v)$ would have to be connected to three other squares in $G$, a contradiction.

It follows, then, that $u$ is of degree 1 in $H$, a contradiction with Lemma \ref{l1}. \qedhere

\end{proof}

By Lemma \ref{l2}, $H$ consists of a number of independent simple cycles. In particular, this means that $G$ must necessarily exhibit a braid-like structure such as the one seen in Fig.\ \ref{f3}, with every cycle in $H$ giving rise to a closed braid of crosses.

Orient, now, all cycles in $H$ in an arbitrary manner, and also orient every graph edge in $G$ so that its projection onto its corresponding red board edge $e$ in $H$ points in the same direction as $e$. (Fig.\ \ref{f5})

\begin{lemma}
\label{l3}

Let $u$ and $v$ be two board vertices in $H$ such that there exists a directed simple path $p$ of length $l$ in $H$ from $u$ to $v$. Then there exists a permutation $\sigma$ of the four squares $A^{-1}(v)$, $B^{-1}(v)$, $C^{-1}(v)$, and $D^{-1}(v)$ such that there exist four directed simple paths in $G$ from $A^{-1}(u)$, $B^{-1}(u)$, $C^{-1}(u)$, and $D^{-1}(u)$ to $\sigma(A^{-1}(v))$, $\sigma(B^{-1}(v))$, $\sigma(C^{-1}(v))$, and $\sigma(D^{-1}(v))$, respectively. Moreover, these four paths have no graph edges in common, the set of their graph edges coincides with the set of all graph edges corresponding to a board edge in $p$, and the permutation $\sigma$ has the same parity as $l$.

\end{lemma}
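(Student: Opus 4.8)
The plan is to prove Lemma \ref{l3} by induction on the length $l$ of the directed path $p$, with the inductive step corresponding to extending $p$ by a single red board edge. The base case $l = 0$ is immediate, with $\sigma$ the identity permutation and the four paths being the trivial one-vertex paths at $A^{-1}(u)$, $B^{-1}(u)$, $C^{-1}(u)$, $D^{-1}(u)$; note that $A^{-1}(u)$, $B^{-1}(u)$, $C^{-1}(u)$, $D^{-1}(u)$ are precisely the four squares sharing the board vertex $u$, so the statement is well-posed.

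For the inductive step, suppose $p$ runs from $u$ to $w$ and is then extended by one red board edge $e$ from $w$ to $v$. By the orientation convention, $e$ is one of $N(w)$, $E(w)$, $S(w)$, $W(w)$, say (by symmetry) $e = N(w)$, so that $v$ lies directly above $w$. The graph edge corresponding to $e$ forms a cross whose two arms connect, in the notation of the excerpt, the two squares just below $e$'s endpoints to the two squares just above them — that is, it connects $\{C^{-1}(w), D^{-1}(w)\}$ to $\{A^{-1}(w), B^{-1}(w)\}$ in one pairing and $\{B^{-1}(v), A^{-1}(v)\}$ to $\{C^{-1}(v), D^{-1}(v)\}$ in the other, and crucially $A^{-1}(w) = C^{-1}(v)$ relabellings identify the shared squares. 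Concretely, the four squares around $w$ and the four around $v$ overlap in exactly two squares (those in the row between $w$ and $v$), and the cross at $e$ supplies two graph edges: one joining the lower-left square around $w$ to the upper-right square around $v$, and one joining the lower-right square around $w$ to the upper-left square around $v$ (or the mirror pairing). I would append these two graph edges to the appropriate two of the four paths already constructed up to $w$, leave the other two paths' endpoints (the two shared squares) unextended, and check that the result is again four edge-disjoint directed simple paths. Edge-disjointness and simplicity follow from the fact that the graph edges of the old paths are exactly those corresponding to board edges of the old $p$, none of which is $e$; and the new endpoints are the four squares around $v$, each hit exactly once. Composing with the bijection induced by the cross changes the permutation $\sigma$ by a transposition — it swaps two of the four squares around $v$ — hence flips its parity, matching the increase of $l$ by one.

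The main obstacle I expect is bookkeeping the identification of squares across the step cleanly: one must verify that the two graph edges of the cross at $e$, when composed with the four paths ending at the squares around $w$, attach to exactly two of those four paths (the two whose current endpoints are the squares in the row not shared with $v$) and carry them to two of the four squares around $v$, while the remaining two paths already end at the two shared squares. This requires being careful about which of $A^{-1}, B^{-1}, C^{-1}, D^{-1}$ around $w$ coincide with which around $v$, and which pairing the cross realizes — there are two possible pairings for a cross on a given board edge, and in either case the induced map on the four squares around $v$ is a single transposition, so the parity claim is robust. Once this local matching is pinned down, the claims that the paths remain simple, remain pairwise edge-disjoint, and that their edge set is exactly the set of graph edges corresponding to board edges of the extended $p$, all follow by the inductive hypothesis together with the observation that the newly added graph edges correspond precisely to the newly added board edge $e$ and to no board edge of the old $p$ (so no graph vertex or edge is repeated).

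A secondary point worth isolating as a small sublemma is the precise correspondence between a red board edge and the two graph edges of its cross: namely, if $e$ is red then its cross consists of two knight moves, and the four endpoints of these two moves are exactly the four squares lying diagonally adjacent to $e$'s midpoint, paired so that each move joins a square on one side of $e$ to a square on the other side, with the two moves forming the two diagonals of the $2 \times 4$ (or $4 \times 2$) block centred on $e$. Having this stated once makes all four directions of the inductive step symmetric, so that only one case need be written out in full.
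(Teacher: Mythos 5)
Your overall strategy is the same as the paper's: induct on the length of $p$, peel off one board edge (you peel the last edge, the paper peels the first --- immaterial), splice the two knight moves of the cross over that edge onto two of the four existing paths, and track the induced permutation of the four labels $A$, $B$, $C$, $D$, whose parity must flip at each step. The base case, the edge-disjointness argument, and the observation that the newly added graph edges correspond exactly to the new board edge and to no board edge of the old path are all in order.

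The genuine problem is the parity bookkeeping, which is the heart of the lemma. Take your case $e = N(w)$ with $v$ directly above $w$. The shared squares are $C^{-1}(w) \equiv B^{-1}(v)$ and $D^{-1}(w) \equiv A^{-1}(v)$ --- not ``$A^{-1}(w) = C^{-1}(v)$'' as you write: those two squares are the endpoints of one arm of the cross, not the same square --- and the two arms of the cross run $A^{-1}(w) \to C^{-1}(v)$ and $B^{-1}(w) \to D^{-1}(v)$. Consequently the label permutation induced by one step is $A \mapsto C$, $B \mapsto D$, $C \mapsto B$, $D \mapsto A$: a $4$-cycle with no fixed points, not a transposition. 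It is true that the two paths ending at the shared squares are not extended, but their endpoints still change label (e.g.\ from $C$ relative to $w$ to $B$ relative to $v$), so the permutation does not fix them. You are rescued only by the accident that a $4$-cycle is also odd, so the parity does flip at each step and the lemma's conclusion survives; but as written the decisive step rests on a false description of the permutation, and if the true per-step permutation had been even (say a product of two disjoint transpositions) the lemma and with it Theorem \ref{t1} would collapse --- so this permutation must actually be computed, as the paper does with its explicit $\pi$ (a $4$-cycle, noted to be odd). A minor further point: your hedge ``or the mirror pairing'' is unnecessary, since for a given board edge there is exactly one pair of knight moves whose common midpoint is that edge's midpoint; pinning this down also removes the ambiguity that led to the incorrect identification of the shared squares.
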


\begin{proof}

By induction on $l$.

If $l = 0$, then $\sigma$ is the identity permutation and all the paths in question are of zero length.

If $l \ge 1$, then let $e$ be the first board edge in $p$, connecting $u$ to $w$. By hypothesis, there exists a permutation $\tau$ of the same parity as $l - 1$ of the squares $A^{-1}(v)$, $B^{-1}(v)$, $C^{-1}(v)$, and $D^{-1}(v)$ such that there exist directed paths $p_a$, $p_b$, $p_c$, and $p_d$ in $G$ from $A^{-1}(w)$, $B^{-1}(w)$, $C^{-1}(w)$, and $D^{-1}(w)$ to $\tau(A^{-1}(v))$, $\tau(B^{-1}(v))$, $\tau(C^{-1}(v))$, and $\tau(D^{-1}(v))$, respectively, which have no graph edges in common and the set of whose graph edges coincides with the set of all graph edges corresponding to a board edge in $p$ other than $e$.

Suppose that $e = E(u)$; all other cases ($e = N(u)$, $e = W(u)$, and $e = S(u)$) are treated analogously.

Let $\pi$ be the permutation \[
\pi = \left( \begin{array}{cccc} A & B & C & D\\ C & A & D & B \end{array} \right),
\] and let $\sigma = \tau \circ \pi$.

Since $B^{-1}(u) \equiv A^{-1}(w)$, the path $p_a$ is also a directed path in $G$ from $B^{-1}(u)$ to $\tau(A^{-1}(v)) \equiv \tau(\pi(B^{-1}(v))) \equiv \sigma(B^{-1}(v))$. Similarly, $p_d$ is also a directed path in $G$ from $C^{-1}(u)$ to $\sigma(C^{-1}(v))$. Finally, splicing together the graph edge leading from $A^{-1}(u)$ to $C^{-1}(w)$ and the path $p_c$ results in a directed path $p'_c$ in $G$ from $A^{-1}(u)$ to $\sigma(A^{-1}(v))$, and splicing together the graph edge leading from $D^{-1}(u)$ to $B^{-1}(w)$ and the path $p_b$ results in a directed path $p'_b$ in $G$ from $D^{-1}(u)$ to $\sigma(D^{-1}(v))$.

It is easy to see that the four paths $p_a$, $p'_b$, $p'_c$, $p_d$ thus formed have no graph edges in common and that the set of their graph edges coincides with the set of all graph edges corresponding to a board edge in $p$. Furthermore, since $\pi$ is an odd permutation and $\tau$ and $l - 1$ have the same parity, $\sigma = \tau \circ \pi$ and $l$ also have the same parity. \qedhere

\end{proof}

\begin{theorem}
\label{t1}

$G$ consists of an even number of cycles.

\end{theorem}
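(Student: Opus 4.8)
The plan is to count the cycles of $G$ one cycle of $H$ at a time, using Lemma \ref{l3} to replace each cycle of $H$ by four braided strands whose reconnection pattern is a permutation of four elements. Two bookkeeping facts come first. (i) Through the midpoint of any board edge there pass exactly two knight moves joining square centres (a short coordinate check), so every graph edge belongs to a unique cross and the crosses are in bijection with the edges of $H$; hence the edges of $G$ split into pairwise disjoint sets $S_K$, one for each cycle $K$ of $H$, where $S_K$ collects the graph edges of the crosses whose red board edges lie on $K$. (ii) The board vertices, together with the board edges joining them, form a subgraph of the (bipartite) grid graph, so every cycle of $H$ has even length.

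Fix a cycle $K$ of $H$, of length $l$, and a board vertex $u$ on it. As observed in the proof of Lemma \ref{l1}, red board edges never touch border board vertices, so $u$ is interior and its four corner squares $A^{-1}(u), B^{-1}(u), C^{-1}(u), D^{-1}(u)$ are distinct. Applying Lemma \ref{l3} to the closed directed path $p = K$ from $u$ to $u$ (of length $l$) yields a permutation $\sigma$ of these four squares, of parity $l$, together with four pairwise edge-disjoint directed paths $P_A, P_B, P_C, P_D$ in $G$, each running from a corner square $X$ to $\sigma(X)$, the union of their edges being exactly $S_K$.

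I would then show that $S_K$ is a disjoint union of whole cycles of $G$ and that the number of these cycles equals $c(\sigma)$, the number of cycles of $\sigma$. Every square has $0$ or $2$ of its two graph edges in $S_K$: a non-corner square touched by $S_K$ lies interior to exactly one $P_X$ (two would force degree $4$ in $G$) and so contributes both its edges, while a corner square is the start of one $P_X$ and the end of the unique $P_Y$ with $\sigma(Y) = X$, again contributing both; so $S_K$ is a disjoint union of cycles of $G$. Splicing $P_{Y_1}, P_{Y_2}, \dots$ together along each cycle $(Y_1\, Y_2\, \cdots)$ of $\sigma$ gives a closed walk that repeats no edge, hence — in the $2$-regular graph $G$ — a single cycle; conversely any cycle of $G$ lying in $S_K$ must pass through a corner square (otherwise it would coincide with some $P_X$, which terminates at a corner) and so arises this way. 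Thus the cycles of $G$ inside $S_K$ biject with the cycles of $\sigma$.

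To finish: for a permutation $\sigma$ of four elements $\operatorname{sgn}(\sigma) = (-1)^{4 - c(\sigma)} = (-1)^{c(\sigma)}$, while $\operatorname{sgn}(\sigma) = (-1)^{l}$ by Lemma \ref{l3}; so $c(\sigma) \equiv l \pmod 2$, and $l$ is even by (ii). Hence each cycle of $H$ contributes an even number of cycles of $G$, and since the sets $S_K$ partition $E(G)$ by (i), $G$ consists of an even number of cycles. The step I expect to be the main obstacle is the third one: making the ``four braided strands reconnecting by $\sigma$'' picture precise enough to conclude that $S_K$ really reassembles into exactly $c(\sigma)$ cycles of $G$. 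Once that is nailed down, the sign computation and the bipartiteness remark are routine.
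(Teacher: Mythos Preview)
Your proposal is correct and follows essentially the same route as the paper: partition the graph edges of $G$ according to the cycles of $H$, apply Lemma~\ref{l3} along each $H$-cycle to get four strands and a gluing permutation $\sigma$, observe that $H$-cycles have even length so $\sigma$ is even, and conclude that each $H$-cycle contributes an even number of $G$-cycles. Your write-up is in fact more explicit than the paper's at the two points the paper passes over quickly --- the reason the $S_K$ partition $E(G)$ (your bookkeeping fact~(i)) and the reason ``$\sigma$ even'' forces an even number of spliced cycles (your identity $\operatorname{sgn}(\sigma)=(-1)^{4-c(\sigma)}$) --- so the ``main obstacle'' you flag is one the paper itself does not spell out beyond the phrase ``when spliced together \ldots\ form an even number of directed simple cycles.''
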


\begin{proof}

Let $s$ be a square in $B$. Since $s$ is connected in $G$ to two other squares, at least one of the board vertices $A(s)$, $B(s)$, $C(s)$, and $D(s)$ is adjacent to a red board edge in $H$. Let $v$ be one such vertex.

Consider the cycle $c$ in $H$ which contains $v$ as a directed path from $v$ to $v$. By Lemma \ref{l3}, there exists a permutation $\sigma$ of $A^{-1}(v)$, $B^{-1}(v)$, $C^{-1}(v)$, and $D^{-1}(v)$ such that there exist directed paths $p_a$, $p_b$, $p_c$, and $p_d$ in $G$ from $A^{-1}(v)$, $B^{-1}(v)$, $C^{-1}(v)$, and $D^{-1}(v)$ to $\sigma(A^{-1}(v))$, $\sigma(B^{-1}(v))$, $\sigma(C^{-1}(v))$, and $\sigma(D^{-1}(v))$, respectively.

Since $c$ contains as many board edges pointing up as ones pointing down, and as many board edges pointing to the left as ones pointing to the right, its total lengths is an even number. By Lemma \ref{l3}, $\sigma$ is then an even permutation. Therefore, when spliced together at the points $A^{-1}(v)$, $B^{-1}(v)$, $C^{-1}(v)$, and $D^{-1}(v)$, the four directed paths $p_a$, $p_b$, $p_c$, and $p_d$ form an even number of directed simple cycles in $G$. Moreover, the set of the graph edges of those cycles coincides with the set of all graph edges corresponding to a board edge in $c$.

And so, for every cycle $C$ in $G$, there exists a cycle $c$ in $H$ which contains all board edges corresponding to a graph edge in $C$; and, for every cycle $c$ in $H$, there exist an even number of cycles in $G$ whose graph edges correspond to board edges in $c$. Summing over all cycles in $H$, it follows that $G$ consists of an even number of cycles, as needed. \qedhere

\end{proof}

\section{Crosspatch Tours}

\begin{theorem}
\label{t2}

No closed crosspatch knight tour exists on a rectangular board.

\end{theorem}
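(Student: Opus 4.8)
The plan is to read off Theorem~\ref{t2} directly from Theorem~\ref{t1}. A closed knight's tour on a rectangular board $B$ is, by definition, a knight graph whose vertices are \emph{all} the squares of $B$, in which every vertex has degree two, and which forms a single cycle; in the language of the previous section it is thus a pseudotour consisting of exactly one cycle. If in addition every move of the tour is part of a cross, then this pseudotour is a \emph{crosspatch} pseudotour, so Theorem~\ref{t1} applies and forces its number of cycles to be even. Since a single cycle is an odd number of cycles, this is a contradiction, and hence no closed crosspatch knight tour can exist on $B$.

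Accordingly, the proof I would write is essentially one sentence: suppose such a tour existed; being $2$-regular on the full square set it is a pseudotour, being crosspatch it is a crosspatch pseudotour, and being a single cycle it has an odd number of cycles, contradicting Theorem~\ref{t1}. The only point worth making explicit is why a closed knight's tour qualifies as a pseudotour in the sense of Section~2 at all — namely that its vertex set is exactly the set of squares of $B$ and every vertex has degree precisely two — but this is immediate from the standard definition of a closed tour and requires no argument.

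There is therefore no real obstacle inside Theorem~\ref{t2}: all of the difficulty has already been absorbed into Theorem~\ref{t1}, and so into the parity bookkeeping of Lemmas~\ref{l1} and~\ref{l2} (even degrees in $H$, whence $H$ is a disjoint union of simple cycles) together with the path-tracing and parity-of-permutation analysis of Lemma~\ref{l3} (each cycle of $H$ spawns an even number of cycles of $G$). If one wishes, the same reasoning yields slightly more: no connected $2$-regular crosspatch knight graph can span all the squares of a rectangular board, so not only Hamiltonian cycles but any such ``full-board single-cycle'' configuration is excluded.
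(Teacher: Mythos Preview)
Your proposal is correct and matches the paper's own proof, which simply states that Theorem~\ref{t2} is an immediate corollary of Theorem~\ref{t1}. The only difference is that you spell out the one-line justification (a closed tour is a crosspatch pseudotour with a single cycle, contradicting the evenness in Theorem~\ref{t1}), whereas the paper leaves this implicit.
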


\begin{proof}

This is an immediate corollary of Theorem \ref{t1}. \qedhere

\end{proof}

\begin{theorem}
\label{t3}

No open crosspatch knight tour exists on a rectangular board.

\end{theorem}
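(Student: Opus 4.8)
The plan is to re-run the computation from the proof of Lemma~\ref{l1}, this time keeping track of the tour's two endpoints. Suppose, for contradiction, that $P$ is an open crosspatch knight tour on $B$, with endpoints $s_1 \equiv (p_1, q_1)$ and $s_2 \equiv (p_2, q_2)$, which are distinct. Colour the red board edges and form the graph $H$ exactly as in Section~2. The one structural difference from the pseudotour setting is that $s_1$ and $s_2$ now have degree $1$ in $P$ rather than $2$. Consequently, for every square $s$, the eight-term sum $\alpha(E(A(s))) + \alpha(N(A(s))) + \cdots + \alpha(E(D(s)))$ from the proof of Lemma~\ref{l1} counts the red board edges among the eight edges surrounding $s$; these are in natural bijection with the edges of $P$ incident to $s$, so the sum equals $\deg_P(s)$, which is $2$ for every square except $s_1$ and $s_2$, where it equals $1$.

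First I would push the rest of Lemma~\ref{l1}'s proof through verbatim: the telescoping rearrangement there is a formal identity among the $\alpha$'s, indifferent to $P$. Summing the eight-term sums over the rectangle $\{1,\dots,a\} \times \{1,\dots,b\}$ now produces $2ab - f(u)$ in place of $2ab$, where $u \equiv (a,b)$ is the board vertex in question and $f(u)$ is the number of endpoints of $P$ lying in that rectangle. Since the right-hand side of the identity is unchanged and every one of its terms except the final bracket carries an even coefficient, this yields
\[
\deg_H(u) \;=\; \alpha(N(u)) + \alpha(E(u)) + \alpha(S(u)) + \alpha(W(u)) \;\equiv\; 2ab - f(u) \;\equiv\; f(u) \pmod 2 .
\]
So a board vertex has odd degree in $H$ precisely when exactly one of $s_1, s_2$ lies in the lower-left rectangle it cuts out.

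The contradiction then comes from the boundary. As observed in the proof of Lemma~\ref{l1}, no red board edge is incident to a border board vertex, so every border board vertex has degree $0$ in $H$; hence $f$ must be even at every border board vertex. But $s_1 \ne s_2$, so either $p_1 \ne p_2$ or $q_1 \ne q_2$. If $p_1 \ne p_2$, say $p_1 < p_2$, then the border board vertex $(p_1, n)$ on the top edge of the board has $s_1$---but not $s_2$---in its rectangle, so $f(p_1, n) = 1$. If instead $p_1 = p_2$, and hence $q_1 \ne q_2$, say $q_1 < q_2$, then the border board vertex $(m, q_1)$ on the right edge of the board has $f(m, q_1) = 1$. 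In either case $f$ is odd at a border board vertex, contradicting $\deg_H \equiv f \pmod 2$. Therefore no open crosspatch knight tour exists on a rectangular board.

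The only step calling for genuine care, I expect, is the re-derivation of Lemma~\ref{l1}'s identity in the presence of two degree-one squares---but the change is confined to the value of one side of that identity, so it is routine rather than a real obstacle. (The degenerate situations are to be dealt with separately: the assertion is vacuous when $B$ admits no knight tour and trivial when $B$ consists of a single square.) Note that, in contrast with Theorem~\ref{t2}, this argument does not appeal to Theorem~\ref{t1} itself; the very boundary-parity phenomenon underlying Theorem~\ref{t1} already forces the conclusion once one inspects it at the edge of the board.
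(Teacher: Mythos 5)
Your proof is correct and follows essentially the same route as the paper: both rerun the quadrant-counting argument from the proof of Lemma~\ref{l1} for the open tour, concluding that the parity of $\deg_H(u)$ equals the parity of the number of endpoint squares in the quadrant determined by $u$. The only (harmless) difference is how the contradiction is extracted: the paper compares two quadrants of one vertex (one containing exactly one endpoint, another containing none), whereas you evaluate at a border board vertex, where $\deg_H$ vanishes yet exactly one endpoint lies in the lower-left quadrant.
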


\begin{proof}

Suppose that a crosspatch open knight tour $G$ existed on some rectangular board $B$ of size $m \times n$. For every board vertex $v \equiv (a, b)$, partition $B$'s squares into four quadrants as follows: \[
\begin{aligned}
& \{ (i, j) \mid 1 \le i \le a, &1 \le j \le b \},&\\
& \{ (i, j) \mid a < i \le m, &1 \le j \le b \},&\\
& \{ (i, j) \mid a < i \le m, &b < j \le n \},&\;\textrm{and}\\
& \{ (i, j) \mid 1 \le i \le a, &b <  j \le n \}.&
\end{aligned}
\]

There exists a board vertex $u$ such that one of its corresponding quadrants contains exactly one square of degree 1 in $G$. Following the same steps as in the proof of Lemma \ref{l1}, we see that $u$ must have an odd degree in $H$.

Since there are only two endpoint squares in $G$, though, another one of $u$'s respective quadrants must contain no endpoint squares at all. Following the same steps as in the proof of Lemma \ref{l1} once more, we see that $u$ must be of even degree in $H$, a contradiction. \qedhere

\end{proof}

\end{document}